\documentclass[a4paper,12pt]{article} 

\usepackage[T1]{fontenc}
\usepackage{amssymb}

\usepackage[dvips]{graphicx}

\usepackage{amsfonts, amsmath, amsthm, amssymb}
\usepackage[ruled,vlined,linesnumbered]{algorithm2e}
\usepackage{epsfig}
\usepackage[T1]{fontenc}
\usepackage[cp1250]{inputenc}

\usepackage{tikz}
\usepackage{fancyhdr}
\pagestyle{fancy}
\lhead{M. \v Crepnjak, T. Lunder}
\rhead{Countably Markov interval functions}

\makeindex



\allowdisplaybreaks[1]

\input{tcilatex}

\usepackage{graphicx}
\usepackage{amssymb}
\usepackage{amsmath}
\usepackage{amsthm}
\usepackage{tikz}
\usetikzlibrary{calc}

\theoremstyle{cuptheorem}
\newtheorem{prop}[theorem]{Proposition}
\newtheorem{cor}[theorem]{Corollary}
\theoremstyle{cupdefn}
\newtheorem{defn}[theorem]{Definition}
\theoremstyle{cuprem}
\newtheorem{rem}[theorem]{Remark}
\numberwithin{equation}{section}


\newcommand{\Lim}{\textup{Lim}}
\newcommand{\R}  {\mathop{\mathbb{R}}\nolimits}
\usepackage{diagrams}
\usepackage{tikz}

\theoremstyle{remark}

\numberwithin{equation}{section}



\begin{document}

\title{Inverse limits with countably Markov interval functions}
\author{Matev\v z \v Crepnjak, Tja\v sa Lunder}
\maketitle

\begin{abstract}
	We introduce countably Markov interval functions and show that two inverse limits with countably Markov interval bonding functions are homeomorphic 
	if the functions follow the same pattern. This result presents a generalization of well-known results of S.\ Holte, and I.\ Bani\v c and T.\ Lunder. 
\end{abstract}

\maketitle


%
%

\section{Introduction}

S.\ Holte proved under which conditions two inverse limits with Markov interval bonding functions are homeomorphic \cite{SHolte}. A generalization of Markov interval maps was introduced in \cite{BL}, where authors defined so-called generalized Markov interval functions. They are a non-trivial generalization of single-valued mappings from $I=[x,y]$ to $I$ to set-valued functions from $I$ to $2^I$. 

An upper semicontinuous function $f$ from $I = [x, y]$ to $2^I$ is a generalized Markov interval function with respect to $A=\{a_0, a_1, \ldots , a_m \}$, if 
\begin{itemize}
\item[-] $a_0=x < a_1 < \ldots < a_m=y$,
\item[-] the image of each point in every component of $I \setminus A$ is a singleton and for any two different points in the same component  of $I \setminus A$ holds that the corresponding images are different, 
\item[-] the image of every point in $A$ is a closed interval (possibly degenerate) with both endpoints in $A$, 
\item[-] for each $\displaystyle j=0,1, \ldots, m-1: \lim_{x \uparrow a_{j+1}} f(x), \lim_{x \downarrow a_{j}} f(x) \in A$.
\end{itemize}  
I.\ Bani\v c and T.\ Lunder also introduced the conditions when such functions follow the same pattern and, with this notion, proved that two generalized inverse limits with generalized Markov interval bonding functions are homeomorphic, if the bonding functions follow the same pattern. 
More precisely, let $ \{f_n\}_{n=1}^{\infty}$ be a sequence of upper semicontinuous functions from $I=\left[ a_0,a_m \right]$ to $2^{I}$ with surjective graphs, which are all generalized Markov interval functions with respect to $A=\{a_0 , a_1 , \ldots , a_m \}$ and let $ \{g_n\}_{n=1}^{\infty}$ be a sequence of upper semicontinuous functions from $J=\left[ b_0,b_m \right]$ to $2^{J}$ with surjective graphs, which are all generalized Markov interval functions with respect to $B=\{b_0 , b_1 , \ldots , b_m \}$. If for each $n$, $f_n$ and $g_n$ are generalized Markov interval functions with the same pattern, then $\varprojlim\{I,f_n\}_{n=1}^{\infty}$ is homeomorphic to $\varprojlim\{J,g_n\}_{n=1}^{\infty}$. 

We point out that Markov interval maps and generalized Markov interval functions are both defined with respect to a finite set $A$.
In this article we extend the notion of these functions in such a way that $A$ is an infinite countable set. We will call them countably Markov interval functions.
There are many examples of functions that have already been studied intensively in various areas for different reasons, and can now be interpreted as such countably Markov interval functions that are defined with respect to an infinite countable set. For example, (skew) tent functions can be interpreted as such countably Markov interval functions. Also, one can find such a function in \cite[p.\ 17]{ingram3}  
(a more detailed description of this function is given in Example \ref{BENNET}).

Our main result says that two inverse limits with countably Markov interval functions are homeomorphic, if the bonding functions follow the same pattern. 
 
%
%
\section{Definitions and notation}
	Our definitions and notation mostly follow \cite{BL,ingram,nadler}.

  A set is {\it countable\/} if it is finite or of the cardinality $\aleph_0$.
  
	A {\it map\/} is a continuous function. In the case where $f: \mathbb{R} \rightarrow \mathbb{R}$ is a map and $a \in \mathbb{R}$, we use $\displaystyle \lim_{x \downarrow a} f(x)$ to denote the 
	{\em right-hand limit\/} and $\displaystyle \lim_{x \uparrow a} f(x)$ to denote the {\em left-hand limit\/} 
	of the function $f$ at the point $a$ (for more details see \cite[p.\ 83--95]{RUDIN}). In Section 3 we define a generalization of this notion to limits of set-valued functions.
	
	For a metric space $(X, d)$, for $r>0$ and for $a \in X$, 
	$B_{r}(a)=\{x \in X\ |\ d(x,a)<r\}$ denotes an open ball in $X$.

	For a compact metric space $X$, we denote by $2^X$ the set of all nonempty closed subsets of $X$. 

	If $f:X\rightarrow 2^Y$ is a function, then the {\em graph\/} of $f$, $\Gamma(f)$, 
	is defined as $\Gamma(f)=\{ (x,y)\in X\times Y\ |\ y \in f(x)\}$.

	A function $f : X\rightarrow 2^Y$ {\em has a surjective  graph}, 
	if for each $y\in Y$ there is an $x\in X$, such that $y\in f(x)$. 

	Let $f : X\rightarrow 2^Y$ be a function. If for each open set $V\subseteq Y$, 
	the set $\{x\in X \ | \  f(x) \subseteq V\}$ is open in $X$, then $f$ is an {\em upper semicontinuous\/} function 
	(abbreviated u.s.c.) from $X$ to $2^{Y}$.

	The following theorem is a well-known characterization of u.s.c.\ functions between compact metrics spaces 
	(for example, see \cite[p.~120, Theorem 2.1]{ingram}).

	\begin{theorem}\label{grafi}
		Let $X$ and $Y$ be compact metric spaces and $f:X\rightarrow 2^Y$ a function. 
		Then $f$ is u.s.c.\ if and only if its graph $\Gamma (f)$ is closed in $X\times Y$.
	\end{theorem}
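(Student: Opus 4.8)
The plan is to prove both implications directly, exploiting compactness of $Y$ in each direction; closedness of $\Gamma(f)$ is most conveniently established through its complement, and upper semicontinuity through the complement of $\{x \in X \mid f(x) \subseteq V\}$.

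For the forward direction, assume $f$ is u.s.c.\ and show that $(X \times Y) \setminus \Gamma(f)$ is open. Fix a point $(x_0, y_0)$ with $y_0 \notin f(x_0)$. Since $f(x_0)$ is a nonempty closed subset of the compact space $Y$, it is compact, so $r := \tfrac{1}{2} d(y_0, f(x_0)) > 0$. The set $V = \{y \in Y \mid d(y, f(x_0)) < r\}$ is then open, contains $f(x_0)$, and is disjoint from $B_{r}(y_0)$. By upper semicontinuity, $W = \{x \in X \mid f(x) \subseteq V\}$ is open and contains $x_0$, so $W \times B_{r}(y_0)$ is an open neighborhood of $(x_0, y_0)$. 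This neighborhood misses $\Gamma(f)$: if $(x,y)$ lay in both, then $x \in W$ would force $y \in f(x) \subseteq V$, contradicting $y \in B_{r}(y_0)$ together with $V \cap B_{r}(y_0) = \emptyset$. Hence the complement of $\Gamma(f)$ is open, so $\Gamma(f)$ is closed.

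For the reverse direction, assume $\Gamma(f)$ is closed and fix an open $V \subseteq Y$; I would show that $C = \{x \in X \mid f(x) \not\subseteq V\}$ is closed, using the sequential characterization available since $X$ is metric. Take $x_n \in C$ with $x_n \to x_0$, and for each $n$ pick $y_n \in f(x_n) \setminus V$. Because $Y$ is compact, some subsequence $y_{n_k}$ converges to a point $y_0$, which lies in the closed set $Y \setminus V$. Then $(x_{n_k}, y_{n_k}) \to (x_0, y_0)$ with each term in $\Gamma(f)$, so closedness of $\Gamma(f)$ yields $y_0 \in f(x_0)$; since $y_0 \notin V$, we get $f(x_0) \not\subseteq V$, that is $x_0 \in C$. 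Thus $C$ is closed and its complement $\{x \in X \mid f(x) \subseteq V\}$ is open, so $f$ is u.s.c.

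The only genuinely delicate points are the two uses of compactness. In the forward direction, compactness of $f(x_0)$ guarantees the strictly positive separating distance $r$, and in the reverse direction, compactness of $Y$ supplies the convergent subsequence of $\{y_n\}$ that lets us pass to the limit inside the closed graph. Neither step is hard, but both fail for noncompact $Y$, so I would invoke the standing compactness hypotheses explicitly at these two points rather than glossing over them.
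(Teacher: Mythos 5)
Your proof is correct. The paper itself does not prove this statement; it quotes it as a well-known characterization and cites Ingram--Mahavier for it, so there is no ``paper's approach'' to compare against --- your two-directional argument (separating $f(x_0)$ from $y_0$ by a positive distance in the forward direction, and extracting a convergent subsequence inside the closed graph in the reverse direction) is exactly the standard proof one finds in the cited source. One small remark: in the forward direction you do not actually need compactness of $f(x_0)$, only that it is a nonempty closed set not containing $y_0$, which already gives $d(y_0,f(x_0))>0$; the genuinely essential use of compactness is the one you flag in the reverse direction, where compactness of $Y$ produces the convergent subsequence of the $y_n$.
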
 
	
	In this paper we use the standard projections, $\pi_1, \pi_2:X \times X \rightarrow X$, $\pi_1(x,y)=x$ and $\pi_2(x,y)=y$.
	
	If $F:X\rightarrow 2^Y$ is a u.s.c.\ function, where for each $x \in X$, the image $F(x)$ is a singleton in $Y$; then we can interpret it as a 
	single-valued continuous function. Obviously, for any continuous function $f:X\rightarrow Y$, the function $F:X\rightarrow 2^Y$, 
	defined by $F(x)=\{f(x)\}$, is an u.s.c.\ function.
	In this special case, we say that $F$ is injective if $f$ is injective.

	Let $A$ be a subset of $X$ and let $f:X \rightarrow 2^Y$ be a function.
	The {\em restriction \/} of $f$ on the set $A$, $f|_{A}$, is the function from $A$ to $2^Y$ 
	such that $f|_{A}(x)=f(x)$ for every $x \in A$.

	
	Let $f:X\rightarrow 2^{Y}$ be a function. Then we say that $f$ is {\it single-valued at some point\/} $x \in X$ 
	if $f(x)$ consists of a single point. We also say that $f$ is {\it single-valued on some subspace\/} $Z \subseteq X$ if the above holds for each $x \in Z$.  

	A sequence $\{X_k,f_k\}_{k=1}^{\infty}$ of compact metric spaces $X_k$ 
	and u.s.c.\ functions $f_k:X_{k+1}\rightarrow 2^{X_{k}}$, is an {\em inverse sequence with u.s.c.\ bonding functions\/}. 

	The {\em inverse limit\/} of an inverse sequence $\{X_k, f_k\}_{k=1} ^\infty$ with u.s.c.\ bonding functions is defined 
	as the subspace of $\prod_{k=1}^\infty X_k$ of all points $(x_1, x_2, \ldots )$, such that $x_k \in f_k(x_{k+1})$ for each $k$.
	The inverse limit of an inverse sequence $\{X_k, f_k\}_{k=1} ^\infty$ is denoted by $\varprojlim\{X_k,f_k\}_{k=1}^{\infty}$.

	In this paper we deal only with the case when for each $k$, $X_k$ is a closed interval $I=[x,y]$ 
	and $f_k:I\rightarrow  2^{I}$. So, we denote the inverse limit simply by $\varprojlim\{I,f_k\}_{k=1}^{\infty}$.

	The concept of inverse limits of inverse sequences with u.s.c.\ bonding functions (also known as generalized inverse limits) was
	introduced by Mahavier in \cite{mah} and later by Ingram and Mahavier in \cite{ingram}. 
	Since then, inverse limits have appeared in many papers (more references can be found in \cite{ingram4,ingram3}).

%
%
	
\section{Countably Markov interval functions}In this section we introduce countably Markov interval functions and show some of their properties.

	There are many results about limits of sequences of sets in metric spaces, for example see \cite[p.\ 56]{nadler} or \cite[p.\ 17]{macias}, 
	where other references can be found. 
	In the beginning of this section, we use a generalization of the above results about limits of sequences of sets, 
	by dealing with limits of set-valued functions $f$, i.e.\ the left-hand limit $\displaystyle \Lim_{t \uparrow a} f(t)$ and
	the right-hand limit $\displaystyle \Lim_{t \downarrow a} f(t)$.
	They are defined in such a way that in the cases where $f$ can be interpreted as a single-valued function, 
	the above limits behave as standard limits for single-valued functions (if they exist), as introduced in the previous section. 

\begin{defn}\label{limsup}
Let $f : I=[x, y] \to 2^{I}$ be a set-valued function. We define the left-hand limit and the right-hand limit of $f$ at a point $a \in I$ as follows: 
\begin{align*}
	\displaystyle \Lim_{t \uparrow a} f(t) = & \{t \in I \ | \ \textrm{ for each } \varepsilon > 0 \textrm{ there exists a } z \in (a - \varepsilon, a ) \\ & \textrm{ such that } (\{z\} \times f(z))\cap B_{\varepsilon}(a,t) \neq \emptyset\}.
\end{align*}
 \begin{align*}
\displaystyle \Lim_{t \downarrow a} f(t) = & \{ t \in  I\ | \ \textrm{for each } \varepsilon > 0 \textrm{ there exists a } z \in (a, a + \varepsilon ) \\ & \textrm{ such that } (\{z\} \times f(z))\cap B_{\varepsilon}(a,t) \neq \emptyset\}.
\end{align*}
\end{defn}

\begin{rem}
 If $f : I=[x, y] \to 2^{I}$ is a set-valued function, then the limits $\displaystyle \Lim_{t \uparrow x} f(t) $ and $\displaystyle \Lim_{t \downarrow y} f(t)$ are empty sets.
\end{rem}

\begin{example}\label{xxx}
Let the set-valued function $f: [0,1] \to 2^{[0,1]}$ be defined by its graph, which is the union of the following sets:
\begin{enumerate}
	\item the straight line segment with endpoints 
		$(\frac{1}{2}-\frac{1}{2n},\frac{1}{3})$ and $(\frac{1}{2}-\frac{1}{2n+1},\frac{2}{3})$ 
		for each positive integer $n$,	
	\item the straight line segment with endpoints $(\frac{1}{2}, 0)$ to $(\frac{1}{2},1)$,
	\item the straight line segment with endpoints $(\frac{1}{2}, \frac{4}{5})$ to $(1,1)$,
\end{enumerate}
see Figure \ref{Lim}.

The left-hand limit of $f$ at the point $\frac{1}{2}$ is $\Lim_{t \uparrow \frac{1}{2}} f(t) = \left[\frac{1}{3},\frac{2}{3} \right]$ and the right-hand limit is $\Lim_{t \downarrow \frac{1}{2}} f(t) = \{\frac{4}{5}\}$. 

\begin{figure}[h!]
	\centering
		\includegraphics[width=13.em]{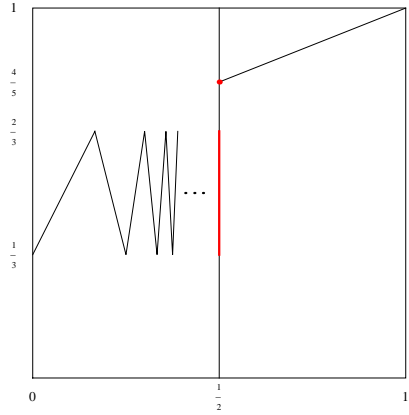}
		\caption{The graph of $f$ from Example \ref{xxx}.}
		\label{Lim}	
\end{figure}

\end{example}

\begin{example}\label{xxxx}
Let $g:I=[0,1] \to 2^{I}$ be defined by its graph,  $\Gamma(g)= ([0,\frac{1}{2})\times[0,1]) \cup ([\frac{1}{2},1]\times\{\frac{1}{2}\})$,
as seen in Figure \ref{Lim2}.
\begin{figure}[h!]
	\centering
		\includegraphics[width=13.em]{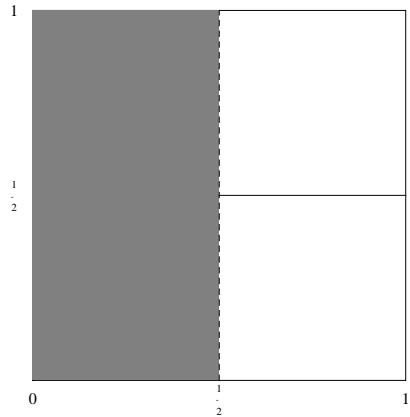}
		\caption{The graph of $g$ from Example \ref{xxxx}.}
		\label{Lim2}	
\end{figure}

The left-hand limit of $g$ at the point $\frac{1}{2}$ is  $\Lim_{t \uparrow \frac{1}{2}} g(t) = [0,1]$. 
The right-hand limit of $g$ at the point $\frac{1}{2}$ is $\Lim_{t \downarrow \frac{1}{2}} g(t) =  \{\frac{1}{2}\}$. 

\end{example}
The following auxiliary results mostly follow directly from Definition \ref{limsup} and are easy to proof. For the completeness of the paper, we give the proofs anyway.

\begin{lemma}\label{prvalema}
Let $x,y \in \R$, $x < y$. Let $f:I=[x,y] \rightarrow 2^{I}$ be a u.s.c.\ function and $a \in I$. The following two statements hold true:
\begin{enumerate}
	\item $\displaystyle \Lim_{t \uparrow a} f(t) \subseteq  f(a)$ and $\displaystyle \Lim_{t \uparrow a} f(t)$ is closed in $ f(a)$; if $a\neq x$ then $\displaystyle \Lim_{t \uparrow a} f(t) \neq \emptyset$.
	\item $\displaystyle \Lim_{t \downarrow a} f(t) \subseteq  f(a)$ and $\displaystyle \Lim_{t \downarrow a} f(t)$ is closed in $ f(a)$; if $a\neq y$ then $\displaystyle \Lim_{t \downarrow a} f(t) \neq \emptyset$.
\end{enumerate}
\end{lemma}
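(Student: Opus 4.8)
The two items are mirror images of one another under reflecting the interval about its midpoint, so I would prove item~(1) in full and then obtain item~(2) by the identical argument with $x$ replaced by $y$ and the one-sided intervals $(a-\varepsilon,a)$ replaced by $(a,a+\varepsilon)$. Throughout I would read $B_\varepsilon(a,t)$ as the open ball of radius $\varepsilon$ about the point $(a,t)$ in the product $I\times I$ (with the inherited product metric $d$), and I would repeatedly unwind Definition~\ref{limsup} in the concrete form: $t\in\Lim_{t\uparrow a}f(t)$ exactly when for every $\varepsilon>0$ there exist $z\in(a-\varepsilon,a)$ and $w\in f(z)$ with $d\big((z,w),(a,t)\big)<\varepsilon$.

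For the containment $\Lim_{t\uparrow a}f(t)\subseteq f(a)$, I would take a point $t$ of the left-hand limit and apply this characterization with $\varepsilon=\tfrac1n$ for each positive integer $n$, producing $z_n\in(a-\tfrac1n,a)$ and $w_n\in f(z_n)$ with $d\big((z_n,w_n),(a,t)\big)<\tfrac1n$. Then $(z_n,w_n)\to(a,t)$, and each $(z_n,w_n)$ lies in $\Gamma(f)$ because $w_n\in f(z_n)$. Since $f$ is u.s.c.\ and $I$ is a compact metric space, Theorem~\ref{grafi} tells us that $\Gamma(f)$ is closed in $I\times I$, so the limit $(a,t)$ lies in $\Gamma(f)$, i.e.\ $t\in f(a)$.

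To show that $\Lim_{t\uparrow a}f(t)$ is closed (in $I$, hence in $f(a)$ by the previous paragraph), I would take a sequence $t_k$ in the limit set with $t_k\to t$ and verify the defining condition for $t$ directly. Fixing $\varepsilon>0$, I would first choose $k$ with $|t_k-t|<\tfrac\varepsilon2$, and then apply the characterization to $t_k$ with parameter $\tfrac\varepsilon2$ to get $z\in(a-\tfrac\varepsilon2,a)\subseteq(a-\varepsilon,a)$ and $w\in f(z)$ with $d\big((z,w),(a,t_k)\big)<\tfrac\varepsilon2$. The triangle inequality then yields $d\big((z,w),(a,t)\big)<\varepsilon$, which is precisely the condition required of $t$; as $\varepsilon$ was arbitrary, $t\in\Lim_{t\uparrow a}f(t)$.

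The nonemptiness when $a\neq x$ is the one place where compactness of $I$ genuinely enters, and I expect it to be the heart of the argument. Since $a>x$, for all large $n$ I can pick $z_n\in(a-\tfrac1n,a)\cap I$, so that $z_n<a$ and $z_n\to a$; as each $f(z_n)$ is a nonempty closed subset of $I$, I select $w_n\in f(z_n)$. Compactness of $I$ then furnishes a subsequence $w_{n_k}\to w\in I$, so that $(z_{n_k},w_{n_k})\to(a,w)$. It remains to confirm $w\in\Lim_{t\uparrow a}f(t)$: given $\varepsilon>0$, for all large $k$ one has simultaneously $z_{n_k}\in(a-\varepsilon,a)$ and $d\big((z_{n_k},w_{n_k}),(a,w)\big)<\varepsilon$, exhibiting the required witnesses. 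The only subtlety I foresee is the bookkeeping of the strict inequalities $z_n<a$ together with $z_n\to a$, which must be tracked carefully so that every witness lands in the \emph{open} interval $(a-\varepsilon,a)$ demanded by Definition~\ref{limsup}; apart from that, each step is a direct unwinding of the definitions.
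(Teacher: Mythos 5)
Your proposal is correct and follows essentially the same route as the paper: closedness of the graph (Theorem \ref{grafi}) gives the containment, an $\varepsilon/2$ triangle-inequality argument gives closedness, and compactness of $I$ gives nonemptiness. The only difference is cosmetic --- you argue the containment directly rather than by contradiction, and you spell out the compactness/subsequence argument for nonemptiness that the paper merely asserts as ``easily seen.''
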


\begin{proof}
\begin{enumerate}
\item 
If $a = x$ then $\displaystyle \Lim_{t \uparrow a} f(t) = \emptyset$ and therefore the claim is true.

If $a \neq x$, then $\displaystyle \Lim_{t \uparrow a} f(t) \neq \emptyset$ -- 
this is easily seen since $[x,y]$ are metric compacta.
Now suppose that $\displaystyle \Lim_{t \uparrow a} f(t) \not \subseteq  f(a)$. 
This means that there exists a point $\displaystyle (a,s) \in \{a\} \times (\Lim_{t \uparrow a} f(t)  \setminus f(a))$. 
  From the definition of $\displaystyle \Lim_{t \uparrow a} f(t)$ it follows that there exists a convergent sequence $\{(x_{i},s_{i})\}_{i=1}^{\infty} \in \Gamma (f) \subseteq [x,y] \times [x,y]$ with the limit $(a,s)$. 
  Since $s_{i} \in f(x_{i})$ for each $i$ and $s \notin f(a)$, it follows that the graph of $f$ is not closed in $[x,y] \times [x,y]$ - a contradiction.
  
 Now we prove that $\displaystyle \Lim_{t \uparrow a} f(t)$ is closed in $ f(a)$. 
 
Let $\{(a,x_i)\}_{i=1}^{\infty}$ be a convergent sequence of elements in $\displaystyle \Lim_{t \uparrow a} f(t)$, with the limit $(a,x') \in \{a\} \times f(a)$.
We prove that $\displaystyle (a,x') \in \{a\} \times \Lim_{t \uparrow a} f(t) $.

Let $\varepsilon > 0$. 
Since $(a,x')$ is the limit of $\{(a,x_i)\}_{i=1}^{\infty}$, there exists an $i_0$ such that $(a,x_{i_0}) \in B_{\frac{\varepsilon}{2}}((a,x')) \subset [x,y] \times [x,y]$. Since $\displaystyle (a,x_{i_0}) \in \{a\} \times \Lim_{t \uparrow a} f(t)$, there exists a $z \in (a - {\frac{\varepsilon}{2}}, a )$ such that $ (\{z\} \times f(z))\cap B_{\frac{\varepsilon}{2}}((a,x_{i_0})) \neq \emptyset$. Therefore we can choose a point $(z,z')$ from $(\{z\} \times f(z))\cap B_{\frac{\varepsilon}{2}}((a,x_{i_0}))$.
 Then $$d((z,z'),(a,x')) \leq d((z,z'),(a,x_{i_0})) + d((a,x_{i_0}),(a,x')) \leq \frac{\varepsilon}{2} + \frac{\varepsilon}{2} = \varepsilon.$$ We have found a $z \in (a - \frac{\varepsilon}{2}, a ) \subseteq (a - \varepsilon, a )$, such that $(\{z\}\times f(z)) \cap B_{\varepsilon}((a,x')) \neq \emptyset$.

 \item With the same approach as in 1. we can prove that 
 	$\displaystyle \Lim_{t \downarrow a} f(t)$ is a closed subset of $f(a)$ and 
 	if $a\neq y$ then $\displaystyle \Lim_{t \downarrow a} f(t) \neq \emptyset$.
 \end{enumerate}
\end{proof}

\begin{lemma}\label{drugalema}
Let $x,y \in \R$, $x < y$. Let $f:I=[x,y] \rightarrow 2^{I}$ be a u.s.c.\ function. If $f(a)$ is connected for each $a \in I$, then the following statements hold true for each $a \in I$:
\begin{enumerate}
	\item If $a \neq x$, then $\displaystyle \Lim_{t \uparrow a} f(t) $ is nonempty and connected.
	\item If $a \neq y$, then $\displaystyle \Lim_{t \downarrow a} f(t) $ is nonempty and connected.
\end{enumerate}
\end{lemma}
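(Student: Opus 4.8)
My plan is to concentrate entirely on connectedness, since nonemptiness of both one-sided limits (for $a\neq x$ and $a\neq y$ respectively) is already supplied by Lemma \ref{prvalema}. I will treat the left-hand limit $L:=\Lim_{t\uparrow a}f(t)$ in detail; the right-hand case is the mirror image. By Lemma \ref{prvalema}, $L$ is a closed, hence compact, subset of $f(a)\subseteq I\subseteq\mathbb{R}$, so connectedness is equivalent to $L$ being an interval. The argument is by contradiction: if $L$ is disconnected, then being a compact non-interval in $\mathbb{R}$ it has a gap, i.e.\ there exist $p,q\in L$ with $p<q$ and $(p,q)\cap L=\emptyset$. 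Fix $c\in(p,q)$ together with a small $\rho_0>0$ so that $[c-\rho_0,c+\rho_0]\subseteq(p,q)$, and hence $[c-\rho_0,c+\rho_0]\cap L=\emptyset$.

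The first substantial step, which I expect to be the main obstacle, is to upgrade the pointwise statement ``$[c-\rho_0,c+\rho_0]$ misses $L$'' to a uniform statement near $a$: there is a $\delta_0>0$ such that $f(z)\cap[c-\rho_0,c+\rho_0]=\emptyset$ for every $z\in(a-\delta_0,a)$. This is precisely where Definition \ref{limsup} and the compactness of $[c-\rho_0,c+\rho_0]$ must be combined. If no such $\delta_0$ existed, I could choose $z_n\uparrow a$ with $z_n<a$ and $w_n\in f(z_n)\cap[c-\rho_0,c+\rho_0]$, extract a convergent subsequence $w_n\to w^\ast\in[c-\rho_0,c+\rho_0]$, and then verify directly from Definition \ref{limsup} that $w^\ast\in L$, contradicting that the interval avoids $L$.

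Once such a $\delta_0$ is fixed, the hypothesis that each value of $f$ is connected finishes the structural part. For $z\in(a-\delta_0,a)$ the set $f(z)$ is a connected subset of $\mathbb{R}$ avoiding the interval $[c-\rho_0,c+\rho_0]$, so it lies entirely in $[x,c)$ or entirely in $(c,y]$. This splits the interval $(a-\delta_0,a)$ as $U_-\cup U_+$, where $U_-=\{z:f(z)\subseteq[x,c)\}$ and $U_+=\{z:f(z)\subseteq(c,y]\}$, a disjoint union since $f(z)\neq\emptyset$. Because $[x,c)$ and $(c,y]$ are open in $I$, upper semicontinuity of $f$ makes $\{z:f(z)\subseteq[x,c)\}$ and $\{z:f(z)\subseteq(c,y]\}$ open in $I$, so $U_-$ and $U_+$ are open in $(a-\delta_0,a)$.

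It remains to show both pieces are nonempty. Starting from $p\in L$ with $p<c$ and unwinding Definition \ref{limsup} with $\varepsilon=\min\{\delta_0,\,c-p\}$ produces a $z\in(a-\delta_0,a)$ carrying a value of $f(z)$ strictly below $c$, which forces $z\in U_-$; symmetrically, $q\in L$ with $q>c$ yields a point of $U_+$. Thus the connected interval $(a-\delta_0,a)$ is partitioned into two nonempty disjoint open sets, which is impossible. Hence $L$ is connected, and together with Lemma \ref{prvalema} this establishes statement~1; statement~2 follows by the analogous argument for $\Lim_{t\downarrow a}f(t)$ using right-hand approximations. I note that the result can also be read as a continuity statement for the Kuratowski upper limit of connected sets, but the direct disconnection argument above keeps the proof self-contained.
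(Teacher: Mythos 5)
Your proof is correct, and it takes a genuinely different route from the paper's. The paper also argues by contradiction from a disconnection $\Lim_{t \uparrow a} f(t)=U\cup V$, but its key tool is the cited fact that the graph of a u.s.c.\ function with connected values over an interval is a continuum (\cite[p.~17, Theorem 2.5]{ingram4}): it picks $u\in U$, $v\in V$, finds an intermediate $s\in f(a)$ outside $U\cup V$ (using connectedness of $f(a)$), takes interleaved sequences in $\Gamma(f)$ converging to $(a,u)$ and $(a,v)$, and uses connectedness of the graph over each $[x_i^1,x_i^2]$ to produce points $(x_i,s)$ at the fixed height $s$ converging to $(a,s)$, so that $s\in \Lim_{t\uparrow a}f(t)$ after all. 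You instead transfer the disconnection to the domain: the compactness argument yields a uniform $\delta_0$ with $f(z)\cap[c-\rho_0,c+\rho_0]=\emptyset$ for all $z\in(a-\delta_0,a)$, and then connectedness of each value $f(z)$ together with upper semicontinuity splits the interval $(a-\delta_0,a)$ into two nonempty disjoint open sets, which is absurd. Your version is more elementary --- it needs only Definition \ref{limsup}, the definition of upper semicontinuity, and Lemma \ref{prvalema}, not the continuum property of $\Gamma(f)$ --- and it makes fully explicit a step the paper leaves informal, namely the selection of the convergent sequence $\{(x_i,s)\}$ at height $s$, which implicitly requires an intermediate-value argument for connected subsets of the plane. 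The paper's version is shorter once the graph theorem is granted. Note also the slightly different way the hypothesis is consumed: the paper uses connectedness of $f(a)$ at the point $a$ itself and of the values over subintervals (through the graph theorem), whereas you use connectedness only of the values $f(z)$ for $z$ in a one-sided neighbourhood of $a$.
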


\begin{proof}

	Note that $\Gamma(f)$ is a continuum by \cite[p.\ 17, Theorem 2.5.]{ingram4}.
	
\begin{enumerate}
	\item\label{prvaTocka} Choose any $a \in (x,y]$ and prove that $\displaystyle \Lim_{t \uparrow a} f(t)$ is connected
	(it is nonempty by Lemma \ref{prvalema}).
	  
	Assume that $\displaystyle \Lim_{t \uparrow a} f(t)$ is not connected. Then there exist nonempty open sets $U, V$ in $\displaystyle \Lim_{t \uparrow a} f(t)$ 
	such that $U \cap V \neq \emptyset$, $U \cup V = \displaystyle \Lim_{t \uparrow a} f(t)$. 
 
	Choose $v \in V$ and $u \in U$. Without loss of generality suppose that $u < v$.
	
	If $[u,v]\subseteq \displaystyle \Lim_{t \uparrow a} f(t)$, then $U \cap [u,v]$ and $V \cap [u,v]$ are nonempty, open in $[u,v]$ and 
	$U \cap V \cap [u,v] = \emptyset$ - this means that $[u,v]$ is not connected - a contradiction. Therefore there exist a $s \in (u,v) \backslash (U \cup V)$ 
	such that $(a,s) \in \{a\} \times f(a)$ (since $f(a)$ is connected by assumption).

	Note that it follows from Lemma \ref{prvalema} that $\displaystyle \Lim_{t \uparrow a} f(t)$ is closed in $ f(a)$. 
  Since $(a, u)$ and $(a, v)$ are points in $\displaystyle \Lim_{t \uparrow a} f(t)$, there exist sequences
  $$\{(x_{i}^1, y_i^1 )\}_{i=1}^{\infty} \in \Gamma (f)$$ with the limit $(a, u)$ and 
  $$\{(x_{i}^2, y_i^2 )\}_{i=1}^{\infty} \in \Gamma (f)$$ 
  with the limit $(a, v)$, and $x_i^1 < x_i^2 < x_{i+1} ^1$ for each positive integer $i$.
  Since $\displaystyle \lim_{i \to \infty} y_i ^1 = u$, $\displaystyle \lim_{i \to \infty} y_i ^2 = v$, and $u < s < v$, 
  there exist a positive integer $i_0$ such that for each $i \geq i_0$, $y_i^1 < s < y_i^2$.
  Recall that the graph of $f$ on each $[x_i ^1, x_i^2]$ is a continuum and therefore also connected by \cite[p.\ 17, Theorem 2.5.]{ingram4}. 
  Therefore we can choose a sequence $\{(x_i,s)\}_{i=i_0}^{\infty} \in \Gamma(f)$, where $x_i \in [x_i^1,x_i^2]$, such that $\{(x_i,s)\}_{i=1}^{\infty}$ is convergent with the limit $(a, s)$. This means that $(a, s) \in \displaystyle \{a\} \times \Lim_{t \uparrow a} f(t)$. Recall that $(a, s) \notin \{a\} \times (U \cup V)$. This means that $U \cup V \neq \displaystyle \Lim_{t \uparrow a} f(t)$.
  \item With the same approach as in \ref{prvaTocka}. we prove that $\displaystyle \Lim_{t \downarrow a} f(t) $ is nonempty and connected for each $a \in [x,y)$.
\end{enumerate}
	
\end{proof}

Finally, we introduce countably Markov interval functions.
Usually, a Markov interval function is defined with respect to a finite set $A$.
In the following definition we generalize the notion of Markov interval functions 
from the case where $A$ is finite to the case where $A$ is countable (including the case when $A$ is countably infinite). 

\begin{defn}\label{CMarkov}
Let $x,y\in \mathbb R$, $x<y$ and let $A$ be a countable subset of $I = \left[ x,y \right] $, containing the endpoints $x$ and $y$ and such that the derived set $A'$ of $A$ is a finite subset of $A$.
	We say that a u.s.c.\ function $f$ from $I$ to $2^I$ is a {\em countably Markov interval function with respect to $A$}, if
	\begin{enumerate}	 	
   \item\label{interval} for each $a \in A$, there exist $u,v \in A$ such that $u \leq v$ and $f(a)=[u,v]$ (degeneracy $u=v$ is possible),
	  \item\label{injektivna} the restriction of $f$ on every interval of $I\setminus A$ is an injective single-valued function,
	  \item for each $a \in A\setminus A'$, the limits $\displaystyle \Lim_{t \downarrow a} f(t), \Lim_{t \uparrow a} f(t)$ are subsets of $A$,
	  \item if $a \in A'$ and $a \neq x$, 
		then $\displaystyle \min(\Lim_{t \uparrow a} f(t)), \max(\Lim_{t \uparrow a} f(t)) \in A$;\\
			if $a \in A'$ and $ a \neq y$, then $\displaystyle \min(\Lim_{t \downarrow a} f(t)), \max(\Lim_{t \downarrow a} f(t)) \in A$.
	\end{enumerate}	
	The set $A$ is called the Markov partition for the function $f$.
	
	A function $f$ is a countably Markov interval function if there exists a set $A$, such that $f$ is countably Markov interval function with respect to $A$.
\end{defn}

	\begin{rem}
	 Since $A'$ is a subset of $A$, obviously $A$ is closed.
	 Minima and maxima that appear in Definition \ref{CMarkov} do exist since $\displaystyle \Lim_{t \uparrow a} f(t), \Lim_{t \downarrow a} f(t)$ 
	 are nonempty and closed by Lemma \ref{prvalema}.
	\end{rem}		

	\begin{rem}
		By Theorem \ref{grafi}, each countably Markov interval function has a closed graph, since it is a u.s.c.\ function.

		One can easily see that every (generalized) Markov interval function (as defined in \cite{SHolte} and \cite{BL}) 
		is also a countably Markov interval function. The set $A$ is finite, therefore also countable and the set $A'$ in this case is empty.
	\end{rem}
	
\begin{example}\label{BENNET}

	\begin{figure}[h!]
		\centering
			\includegraphics[width=13.em]{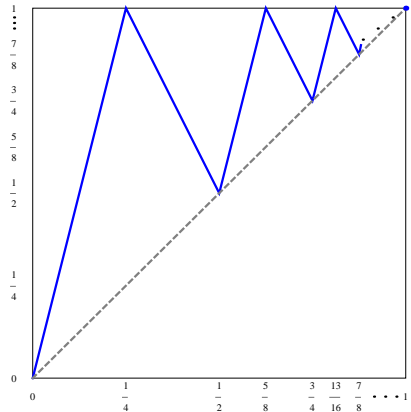}
			\caption{The graph of the function from Example \ref{BENNET}.}
			\label{bennet1}	
	\end{figure}
	
	Let $f: [0,1] \to 2^{[0,1]}$ be the function from \cite[p.\ 17]{ingram3}. 
	The graph $\Gamma(f)$ is the union of the infinite sequence of straight line
	segments as shown in Figure \ref{bennet1}. 
	Obviously, for $A=\{0,1\}\cup \{1 - \frac{3}{2^{i+1}}\ |\ i=1,2,3,\ldots\} \cup \{1-\frac{1}{2^i}\ |\ i=1,2,3,\ldots\}$, $f$ 
	is countably Markov interval function with respect to $A$. 
	The set $A'$ is the singleton $\{1\}$.
	
	This is an example of a countably Markov interval function whose inverse limit has already been studied;
	the example is taken from W.~T.~Ingram's book \cite[p.\ 17]{ingram3}, where it is attributed to R.~Bennet \cite{Bennet}.
	More details about its inverse limit can be found in the mentioned book.	
\end{example}	

In Lemma \ref{prvalema} and Lemma \ref{drugalema}, we have already shown some properties of $\displaystyle \Lim_{t \uparrow a} f(t)$ and $\displaystyle \Lim_{t \downarrow a} f(t)$ for any u.s.c.\ function $f$.
If $f$ is a countably Markov interval function with respect to $A$ and if $a \in A'$, we give a more precise description as follows.

\begin{prop}\label{limsup1}
	Let $f : I=[x, y] \to 2^{I}$ be a countably Markov interval function with respect to $A$. For each $a \in A'$ the following statement holds true.
	
	If $a \neq x$ then there exist $u_1, v_1 \in A$, $u_1 \leq v_1$, such that 
	$$\displaystyle \Lim_{t \uparrow a} f(t)  =  [u_1,v_1] \subseteq  f(a),$$
	and
	if $a \neq y$ then there exist $u_2, v_2 \in A$, $u_2 \leq v_2$, such that 
	$$\displaystyle \Lim_{t \downarrow a} f(t)  =  [u_2,v_2] \subseteq  f(a).$$
\end{prop}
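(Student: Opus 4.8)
The plan is to prove the assertion about the left-hand limit under the assumption $a\neq x$; the right-hand case under $a\neq y$ is completely symmetric, and I would only record the needed substitutions at the end. The strategy is to combine Lemma \ref{prvalema} and Lemma \ref{drugalema} to identify $\Lim_{t\uparrow a}f(t)$ as a closed interval, and then read off that its endpoints lie in $A$ directly from the last condition of Definition \ref{CMarkov}.

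First I would verify the hypothesis of Lemma \ref{drugalema}, namely that $f(b)$ is connected for \emph{every} $b\in I$. By condition \ref{interval} of Definition \ref{CMarkov}, $f(b)$ is a closed interval $[u,v]$ whenever $b\in A$, and by condition \ref{injektivna} the image $f(b)$ is a singleton whenever $b\in I\setminus A$; in either case $f(b)$ is connected. Since $a\in A'\subseteq A$ and $a\neq x$, Lemma \ref{drugalema} then yields that $\Lim_{t\uparrow a}f(t)$ is nonempty and connected.

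Next I would upgrade this to a closed interval. By Lemma \ref{prvalema} the set $\Lim_{t\uparrow a}f(t)$ is contained in $f(a)$ and closed in $f(a)$; as $f(a)=[u,v]$ is closed in $I$ and $I$ is closed in $\R$, the set $\Lim_{t\uparrow a}f(t)$ is a nonempty, connected, closed and bounded subset of $\R$, hence a (possibly degenerate) closed interval. I would therefore put $u_1=\min\Lim_{t\uparrow a}f(t)$ and $v_1=\max\Lim_{t\uparrow a}f(t)$, so that $\Lim_{t\uparrow a}f(t)=[u_1,v_1]\subseteq f(a)$ with $u_1\leq v_1$; the minimum and maximum exist because the set is nonempty and closed. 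Finally, the membership $u_1,v_1\in A$ is exactly the first clause of condition 4 of Definition \ref{CMarkov}, applicable since $a\in A'$ and $a\neq x$. For $a\neq y$ one repeats the argument with $\Lim_{t\downarrow a}f(t)$, invoking the $\neq y$ statements of the two lemmas together with the second clause of condition 4.

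The argument involves no serious difficulty; the one point deserving attention is the very first step. Lemma \ref{drugalema} requires connectedness of $f$ on all of $I$, so one must not forget to treat the points of $I\setminus A$ via condition \ref{injektivna} and not only the points of $A$ via condition \ref{interval}. Everything after that is bookkeeping with the two preceding lemmas.
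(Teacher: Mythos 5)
Your proof is correct and follows essentially the same route as the paper: combine Lemma \ref{prvalema} (nonempty, closed in $f(a)$) with Lemma \ref{drugalema} (connected) to identify the limit as a closed interval, then apply condition 4 of Definition \ref{CMarkov} to place its endpoints in $A$. Your explicit verification that $f(b)$ is connected for every $b\in I$ (via conditions \ref{interval} and \ref{injektivna}) is a detail the paper's proof leaves implicit, and it is a worthwhile addition.
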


\begin{proof}
	Let $a \in A'$, $a \neq x$.

	By Lemma \ref{prvalema}, $\displaystyle \Lim_{t \uparrow a} f(t) $ is a nonempty closed subset of $ f(a)$, 
	and by Lemma \ref{drugalema}, $\displaystyle \Lim_{t \uparrow a} f(t)$ is connected. 
	Therefore $\displaystyle \Lim_{t \uparrow a} f(t) =[u_1,v_1]$ for some $u_1,v_1 \in I$.
	By Definition \ref{CMarkov}, $u_1,v_1 \in A$, since $u_1 = \min(\Lim_{t \uparrow a} f(t))$ and $v_1 = \max(\Lim_{t \uparrow a} f(t))$.
	
	The claim about $\displaystyle \Lim_{t \downarrow a} f(t) $ can be proved analogously. 
\end{proof}


In the following definition we introduce when two countably Markov interval functions follow the same pattern.
This is done in such a way that for any two generalized Markov interval functions that follow the same pattern
(as introduced in \cite{BL}) also follow the same pattern (as introduced in Definition \ref{vzorec}) 
when being interpreted as countably Markov interval functions.

\begin{defn}\label{vzorec}
	Let $f: I=\left[ x,y \right] \rightarrow {2^I}$ be a countably Markov interval function with respect to $A$ and 
	let $g: J=\left[ x',y' \right] \rightarrow {2^J}$ be a countably Markov interval function with respect to $B$.
 We say that $f$ and $g$ {\em follow the same pattern with respect to $A$ and $B$} if there exists an increasing bijective function $\tau:A \rightarrow B$ such that
 for each $a \in A$ and for all $u,v\in A$,
	\begin{enumerate}
		\item $f(a)= [u,v] $ if and only if $g(\tau(a))= [\tau(u),\tau(v)] $,
		\item $\displaystyle \Lim_{t \uparrow a} f(t) = [u,v]$ if and only if $\displaystyle \Lim_{t \uparrow \tau(a)} g(t) = [\tau(u),\tau(v)]$, and
		\item $\displaystyle \Lim_{t \downarrow a} f(t) = [u,v]$ if and only if $\displaystyle \Lim_{t \downarrow \tau(a)} g(t) = [\tau(u),\tau(v)]$.	  
	\end{enumerate}
	
We say that $f$ and $g$ {\em follow the same pattern} if there exist Markov partitions $A$ and $B$, such that $f$ is countably Markov interval function with respect to $A$, $g$ is countably Markov interval function with respect to $B$, and 
$f$ and $g$ follow the same pattern with respect to $A$ and $B$.	
\end{defn}

\begin{rem}
	Note that in Definition \ref{vzorec} degeneration is possible, i.e.\ it may happen that $u=v$.
\end{rem}

In the following example we show that the function $\tau$ is not necessarily uniquely determined.

\begin{example}
	Let $f:[0,1] \to 2^{[0,1]}$ be a countably Markov interval function with respect to $A =  \{ \frac{1}{2n} \ | \ n = 1,2,3,\ldots  \} \cup \{1 -  \frac{1}{2(n+1)} \ | \ n = 1,2,3,\ldots \} \cup  \{0,1\}$,
	defined by its graph, which is the union of the following segments:	
	\begin{enumerate}
		\item the straight line segment with endpoints from $(a,0)$ to $(a,1)$ for each $a \in A$,
		\item the straight line segment with endpoints from $(a,0)$ to $(a',1)$ for all $a,a' \in A$, where $a<a'$ and $[a,a']\cap A = \{a,a'\}$.
	\end{enumerate}
	Obviously, $A' = \{0,1\}$.
	
	Let $g:[0,1] \to 2^{[0,1]}$, $\Gamma(g) = \Gamma(f)$ and let $B = A$.	
	
	Firstly, we define $\tau_1 : A \to B$ to be the identity function.  
	
	Secondly, we define $\tau_2 : A \to B$ by
	$$\tau_2(x) =
		\begin{cases}
			x \quad \text{;} & x = 0 \ \text{or} \ x = 1 \\
			\frac{3}{4} \quad \text{;} & x = \frac{1}{2} \\
			\frac{1}{2n}\quad \text{;} & x = \frac{1}{2(n+1)} \\
			1-\frac{1}{2(n+2)} \quad \text{;} & x = 1-\frac{1}{2(n+1)}, \\
		\end{cases}
	$$
	for each positive integer $n$.
	
	Obviously, $\tau_1$ and $\tau_2$ are both increasing bijective functions from $A$ to $B$ satisfying 1., 2.\ and 3.\ from Definition \ref{vzorec}.			
\end{example}


\section{Main result}

The following theorem is the main result of this paper.

\begin{theorem}\label{main}
	Let $ \{f_n\}_{n=1}^{\infty}$ be a sequence of functions from $I=\left[x,y\right]$ to $2^{I}$ 
	which are all countably Markov interval functions with respect to $A$ and let $ \{g_n\}_{n=1}^{\infty}$ 
	be a sequence of functions from $J=\left[x',y' \right]$ to ${2^J}$ which are all countably Markov interval functions 
	with respect to $B$. If $f_n$ and $g_n$ follow the same pattern with respect to $A $ and $B$ for each $n$, 
	then $\varprojlim\{I,f_n\}_{n=1}^{\infty}$ is homeomorphic to $\varprojlim\{J,g_n\}_{n=1}^{\infty}$.
\end{theorem}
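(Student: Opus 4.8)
The plan is to construct an explicit homeomorphism between $\varprojlim\{I,f_n\}$ and $\varprojlim\{J,g_n\}$ by building, at each coordinate level, a homeomorphism $h_n \colon I \to J$ that is compatible with the bonding functions and then assembling these into a map on the product spaces. The natural candidate for $h_n$ is dictated by the order-isomorphism $\tau \colon A \to B$: on the Markov partition points $h_n$ should agree with $\tau$, and on each open complementary interval of $I \setminus A$ we must interpolate. Since by condition \ref{injektivna} of Definition \ref{CMarkov} each $f_n$ is an injective single-valued map on each such interval, the images $f_n$ of that interval form an arc in $I$, and the condition that $f_n$ and $g_n$ follow the same pattern (together with the boundary conditions on the limits $\Lim_{t \uparrow a}$, $\Lim_{t \downarrow a}$) should force the corresponding interval of $J \setminus B$ to map across under $g_n$ in the same combinatorial fashion. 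The key point is that each $h_n$ must be chosen so that it conjugates the dynamics, i.e.\ so that $y \in f_n(z)$ holds if and only if $h_{n}(y) \in g_n(h_{n+1}(z))$; this is precisely the commuting relation needed to push a thread of the first inverse limit to a thread of the second.

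First I would verify that $\tau$ extends to an increasing homeomorphism $h \colon I \to J$. Because $A' $ is finite and $A$ is closed with $A'\subseteq A$, the complement $I \setminus A$ is a countable union of open intervals, and $\tau$ being an increasing bijection of the countable ordered sets $A$ onto $B$ extends continuously to the closure; here I would use the fact that $\tau$ respects the accumulation structure (so that endpoints of complementary intervals map to endpoints of complementary intervals) to guarantee the extension is a genuine homeomorphism rather than merely order-preserving. I would define $h$ to be affine (or at any rate monotone) on each complementary interval, and confirm continuity at the limit points of $A'$ separately, since that is where the infinitely many intervals accumulate. Crucially, this single $h$ can be used at every coordinate, since all the $f_n$ share the partition $A$ and all the $g_n$ share $B$.

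Next I would establish the conjugacy relation $h(f_n(z)) = g_n(h(z))$ for each $z \in I$, distinguishing the cases $z \in A$ and $z \in I \setminus A$. For $z \in A$ we have $f_n(z)=[u,v]$, and condition 1 of Definition \ref{vzorec} gives $g_n(\tau(z)) = [\tau(u),\tau(v)] = [h(u),h(v)] = h([u,v])$, using that $h=\tau$ on $A$ and $h$ is an increasing homeomorphism. For $z$ in a complementary interval, both $f_n|$ and $g_n|$ are single-valued injective maps, and I would argue that the two arcs $f_n$ and $g_n$ have matching endpoints (governed by the $\Lim$ conditions 2 and 3, which control precisely the boundary behaviour of these single-valued branches as one approaches the partition points), so that $h$ carries one branch onto the other. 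This is the step I expect to be the main obstacle, since the single-valued branches of $f_n$ and $g_n$ need not be affine, and one must verify that the endpoints of the image arcs correspond under $h$ even when they lie in $A'$ where infinitely many branches accumulate; handling the limit points in $A'$ uniformly, so that the conjugacy holds across the accumulation, is the delicate part.

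Finally I would assemble the coordinatewise maps into the product homeomorphism $H \colon \varprojlim\{I,f_n\} \to \varprojlim\{J,g_n\}$ defined by $H(x_1,x_2,\ldots) = (h(x_1), h(x_2), \ldots)$. The conjugacy relation ensures that $H$ sends a thread of the first inverse limit (where $x_n \in f_n(x_{n+1})$) to a thread of the second (where $h(x_n) \in g_n(h(x_{n+1}))$), so $H$ is well defined and maps into $\varprojlim\{J,g_n\}$. Since $h$ is a homeomorphism on each factor, $H$ is a continuous bijection with continuous inverse given coordinatewise by $h^{-1}$ (which conjugates $g_n$ back to $f_n$, using that $\tau^{-1}$ witnesses that $g_n$ and $f_n$ follow the same pattern with respect to $B$ and $A$). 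As both inverse limits are compact metric spaces, $H$ is automatically a homeomorphism, completing the proof.
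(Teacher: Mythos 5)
There is a genuine gap, and it is fatal to the plan as stated: you propose to use a \emph{single} homeomorphism $h$ (an extension of $\tau$, affine or at least monotone on each complementary interval) at every coordinate, and then to \emph{verify} the conjugacy $h(f_n(z))=g_n(h(z))$. On the partition $A$ this works, since condition 1 of Definition \ref{vzorec} hands you $g_n(\tau(z))=[\tau(u),\tau(v)]=h([u,v])$. But on a complementary interval $(a,a')$ the relation $h(f_n(t))=g_n(h(t))$ constrains the entire interior parametrization of the branches, not just their endpoints, and a pre-chosen $h$ will not satisfy it. Worse, a single $h$ would have to conjugate \emph{all} the pairs $(f_n,g_n)$ simultaneously: take $J=I$, $B=A$, $\tau=\mathrm{id}$, and $g_1=g_2=f_1$, with $f_2$ a different injective branch on $(a,a')$ having the same endpoint limits as $f_1$ (so that all pattern conditions hold). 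Then $h\circ f_1=f_1\circ h$ and $h\circ f_2=f_1\circ h$ force $h\circ f_1=h\circ f_2$, hence $f_1=f_2$ on $(a,a')$ by injectivity of $h$ --- a contradiction. So no single coordinatewise map can work in general; the obstacle you flag at the end of your third paragraph (``the main obstacle'') is not merely delicate, it is insurmountable in this formulation.

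The fix, which is what the paper does, is to give up on one $h$ and instead build a \emph{sequence} of homeomorphisms $h_i:I\to J$ inductively: $h_1$ is any increasing extension of $\tau$ (essentially your $h$), and then $h_{i+1}$ is \emph{defined} on each complementary interval by $h_{i+1}(t)=\bigl(g_i|_{(\tau(a),\tau(a'))}\bigr)^{-1}\bigl(h_i(f_i(t))\bigr)$, so that the commuting relation $h_i\circ f_i=g_i\circ h_{i+1}$ holds by construction rather than by verification; the pattern conditions (and Proposition \ref{limsup1}) are used only to check that this formula makes sense, i.e.\ that $h_i(f_i(t))$ lands in the range of $g_i|_{(\tau(a),\tau(a'))}$ and that the resulting $h_{i+1}$ is an increasing bijection. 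The final assembly step $H(x_1,x_2,\ldots)=(h_1(x_1),h_2(x_2),\ldots)$ and the compactness argument in your last paragraph are fine once the indices are reinstated.
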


\begin{proof}
	We fix an increasing bijective function $\tau:A \rightarrow B$, having the properties from Definition \ref{vzorec}.
	First we construct inductively homeomorphisms $h_i : I \to J$ such that $h_i(t) = \tau(t)$ for each $t\in A$ and $h_i \circ f_i = g_i \circ h_{i+1}$ 
	for each positive integer $i$,	see the diagram. 
	
	\begin{center}
		\begin{tikzpicture}[node distance=6em, auto]
			\node (X_1) {$I$};
			\node (X_2) [right of=X_1] {$I$};
			\node (X_3) [right of=X_2] {$I$};
			\node (X_4) [right of=X_3] {$I$};	
			\node (X_5) [right of=X_4] {$\ldots$};	
			\node (X_6) [right of=X_5] {$\varprojlim\{I,f_n\}_{n=1}^{\infty}$};	
			\draw[->] (X_2) to node {$f_1$} (X_1);
			\draw[->] (X_3) to node {$f_2$} (X_2);
			\draw[->] (X_4) to node {$f_3$} (X_3);
			\draw[->] (X_5) to node {$f_4$} (X_4);	
			\node (Y_1) [below of=X_1] {$J$};
			\node (Y_2) [right of=Y_1] {$J$};
			\node (Y_3) [right of=Y_2] {$J$};
			\node (Y_4) [right of=Y_3] {$I$};
			\node (Y_5) [right of=Y_4] {$\ldots$};
			\node (Y_6) [right of=Y_5] {$\varprojlim\{J,g_n\}_{n=1}^{\infty}$};	
			\draw[->] (Y_2) to node {$g_1$} (Y_1);
			\draw[->] (Y_3) to node {$g_2$} (Y_2);
			\draw[->] (Y_4) to node {$g_3$} (Y_3);
			\draw[->] (Y_5) to node {$g_4$} (Y_4);
			\draw[->] (X_1) to node {$h_1$} (Y_1);
			\draw[->] (X_2) to node {$h_2$} (Y_2);
			\draw[->] (X_3) to node {$h_3$} (Y_3);
			\draw[->] (X_4) to node {$h_4$} (Y_4);
			\draw[->] (X_6) to node {$H$} (Y_6);
		\end{tikzpicture}
	\end{center}	
	
	For $i=1$ we fix any homeomorphism $h_1:I \to J$ such that for any $t \in A$, $h_1(t) = \tau(t)$ 
	and for any interval $[a,a']\subseteq I$, such that $a<a'$ and $[a,a'] \cap A = \{a,a'\}$,
	$h_1|_{[a,a']}$ is a continuous strictly increasing bijective function from $[a,a']$ to $[\tau(a),\tau(a')]$.	Obviously, such a homeomorphism exists.

	Assume that we have already defined $h_1,h_2,\ldots,h_i$ such that $h_k(t) = \tau(t)$ for each $t\in A$ and $h_k \circ f_k = g_k \circ h_{k+1}$ for each $1\leq k \leq i-1$.
	Now we construct $h_{i+1}$.	
	\begin{enumerate}
		\item If $t \in A$, then $h_{i+1} (t) = \tau(t)$.
		\item If $t \notin A$, then there exist $a,a' \in A$, such that $[a,a'] \cap A = \{a,a'\}$ and $t \in (a,a')$.
			Since $f_i$ is a countably Markov interval function with respect to $A$,
			it follows that $f_i|_{(a,a')}$ is injective single-valued.
			Therefore $ \Lim_{s\downarrow a} f_i(s)$ and $ \Lim_{s\uparrow a'} f_i(s)$ are singletons in $A$, say, 
			$\{\alpha\} = \Lim_{s\downarrow a} f_i(s)$ and $\{\alpha'\} = \Lim_{s\uparrow a'} f_i(s)$.
			Therefore, if $f_i$ is increasing on $(a,a')$, then $f_i(t) \in (\alpha,\alpha')$, and if $f_i$ is decreasing on $(a,a')$, 
			then $f_i(t) \in (\alpha',\alpha)$.  Without loss of generality, we assume
			that $f_i$ is increasing on $(a,a')$. 			
			
			Obviously, $h_i(f_i(t)) \in (\tau(\alpha),\tau(\alpha'))$.
			Therefore 
			$$\emptyset \neq g_i^{-1}|_{(\tau(a),\tau(a'))} (h_i(f_i(t))) \subseteq (\tau(a),\tau(a')).$$ 
			Since $g_i|_{(\tau(a),\tau(a'))}:(\tau(a),\tau(a')) \to (\tau(\alpha),\tau(\alpha'))$ is bijective, 
			it follows that $|(g_i|_{(\tau(a),\tau(a'))})^{-1} (h_i(f_i(t)))| = 1$, say that $(g_i|_{(\tau(a),\tau(a'))}) ^{-1} (h_i(f_i(t))) = \{t'\}$.	So, we define 
			$$h_{i+1}(t) = t'.$$				
	\end{enumerate}

	It follows from the construction of $h_{i+1}$ that $h_{i+1}(t) = \tau(t)$ for each $t\in A$ and $h_i \circ f_i = g_i \circ h_{i+1}$.
	Now we prove that $h_{i+1}:I \to J$ is a homeomorphism.

	To prove that $h_{i+1}$ is bijective, it is enough to prove that $h_{i+1}|_{(a,a')}$ is bijective for each $a,a'\in A$, $[a,a'] \cap A = \{a,a'\}$. 
	The function $h_{i+1}|_{(a,a')}$ is defined as the composition of three bijective functions, namely, $f_{i}|_{(a,a')}$,
	$h_{i}|_{(\alpha,\alpha')}$ and $(g_{i}|_{(\tau(a),\tau(a'))})^{-1}$.
	Therefore $h_{i+1}$ is bijective.
	
	Since $h_{i+1}$ is strictly increasing and surjective, it is homeomorphism.	
	
	Now we define $H: \varprojlim\{I,f_n\}_{n=1}^{\infty} \to \varprojlim\{J,g_n\}_{n=1}^{\infty}$.
	
	Let ${\textbf{x}} = (x_1,x_2,x_3,\ldots) \in \varprojlim\{I,f_n\}_{n=1}^{\infty}$. Then
	$H({\textbf{x}})$ is defined by 
	$$H({\textbf{x}}) = (h_1(x_1),h_2(x_2),h_3(x_3),\ldots).$$
	By \cite[Theorem 4.5.]{{ingram4}}, $H$ is a homeomorphism.

\end{proof}

\begin{cor}
Let $f: I=\left[ x,y \right] \rightarrow 2^{I}$ be a countably Markov interval function with respect to $A$ and let $g: J=\left[ x',y' \right] \rightarrow 2^{J}$ be a countably Markov interval function with respect to $B$. If $f$ and $g$ are countably Markov interval functions with the same pattern then $\varprojlim\{I,f\}_{n=1}^{\infty}$ is homeomorphic to $\varprojlim\{J,g\}_{n=1}^{\infty}$.
\end{cor}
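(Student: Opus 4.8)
The plan is to obtain the Corollary as the special case of Theorem \ref{main} in which every bonding function in each inverse sequence is the same. First I would unpack the hypothesis that $f$ and $g$ follow the same pattern: by Definition \ref{vzorec} this means precisely that there exist Markov partitions $A$ and $B$, with $f$ a countably Markov interval function with respect to $A$ and $g$ a countably Markov interval function with respect to $B$, together with an increasing bijective function $\tau:A\to B$ satisfying conditions 1.--3.\ of that definition. In other words, $f$ and $g$ already follow the same pattern with respect to this fixed pair $A$, $B$.

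Next I would form the constant sequences $f_n:=f$ and $g_n:=g$ for every positive integer $n$. Then each $f_n$ is trivially a countably Markov interval function with respect to $A$ and each $g_n$ a countably Markov interval function with respect to $B$, and the single witness $\tau$ shows that $f_n$ and $g_n$ follow the same pattern with respect to $A$ and $B$ for every $n$, since the conditions in Definition \ref{vzorec} refer only to $f$ and $g$ and not to the index. Thus all hypotheses of Theorem \ref{main} are met, and applying it yields that $\varprojlim\{I,f_n\}_{n=1}^{\infty}$ is homeomorphic to $\varprojlim\{J,g_n\}_{n=1}^{\infty}$. Since $f_n=f$ and $g_n=g$ for all $n$, these inverse limits are exactly $\varprojlim\{I,f\}_{n=1}^{\infty}$ and $\varprojlim\{J,g\}_{n=1}^{\infty}$, which completes the argument.

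I expect no genuine obstacle here: the entire mathematical content lies in Theorem \ref{main}, and the Corollary is a direct specialization of it. The only point worth stating explicitly is that the pattern-matching bijection $\tau$ can be chosen once and reused for every term of the constant sequences, so that the per-index hypothesis of the theorem is satisfied uniformly; beyond this observation the proof is immediate.
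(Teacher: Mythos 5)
Your proof is correct and matches the paper's (implicit) intent exactly: the corollary is stated as an immediate specialization of Theorem \ref{main} to the constant sequences $f_n=f$, $g_n=g$, with one fixed $\tau$ witnessing the same pattern for every index. No further comment is needed.
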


Matev\v z \v Crepnjak and Tja\v sa Lunder

(matevz.crepnjak@um.si and tjasa.lunder@gmail.com)

Faculty of Natural Sciences and Mathematics, 

University of Maribor, 

Koro\v{s}ka 160, SI-2000 Maribor, Slovenia

\end{document}